\begin{document}

%


 \authorrunninghead{M.C. de Bondt}
 \titlerunninghead{An ode to Phipps' jeep convoys}





\title{An ode to Phipps' jeep convoys}

\author{Michiel de Bondt}
\affil{Department of mathematics, University of Nijmegen, \\
       Toernooiveld 1, 6525 ED Nijmegen, The Netherlands}

\email{MichieldeB@netscape.net}

%







\abstract{The jeep problem was first solved by O. Helmer
and N.J. Fine.
But not much later, C.G. Phipps formulated a more general
solution. He formulated a so-called convoy or caravan variant
of the jeep problem and reduced the original problem to it.

We shall refine the convoy idea of Phipps and subsequently 
view a more general jeep problem, which we solve for 
jeep convoys as well as for a single jeep. In the last section 
we solve Maddex' jeep problem.}

\keywords{The jeep problem; Phipps' jeep caravans.}

\begin{article}


\section{INTRODUCTION} \label{intro}

The original jeep problem is formulated as follows. Given
a jeep that can carry one tankload of fuel and can travel one
distance unit per tankload. The jeep is required to cross a desert
$d$ units wide. To do so, it may make depots of fuel in the 
desert. How much fuel is required at the border of the
desert.

In case $x = 1 \frac12$, $2 \frac56$ tankloads suffice. 
The jeep can do the following steps.
\begin{enumerate}

\item Ride to $\frac16$ with a full fuel tank. Dump $\frac23$ 
      tankload at $\frac16$ and ride back to $0$, where you arrive
      empty.

\item Ride to $\frac16$ with a full fuel tank. Take $\frac16$
      tankload from the depot at $\frac16$. Ride farther to $\frac12$.
      Dump $\frac13$ tankloads of fuel at $\frac12$ and ride back 
      to $\frac16$. Take $\frac16$ tankload form the depot at 
      $\frac16$ and ride back to $0$, where you arrive empty.

\item Ride to $\frac16$ with the remaining $\frac56$ tankloads of fuel
      at position $0$. Take the remaining $\frac13$ tankload of fuel
      from position $\frac16$ and ride farther to position $\frac12$.
      Take the $\frac13$ tankload of fuel from this position and ride 
      to $1 \frac12$.

\end{enumerate}

\begin{figure}[ht] \label{jeepexamp}

\begin{center}
\begin{picture}(260,335)(0,-7)

\put(-30,40){\epsfbox{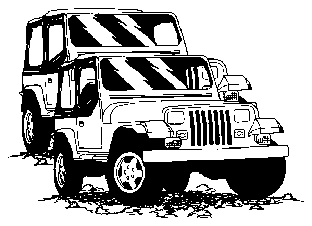}}
\put(30,95){\epsfbox{jeeps.ps}}
\put(210,275){\epsfbox{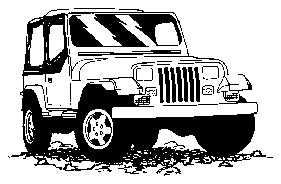}}

\put(-15,0){\makebox(0,0){$2\frac56$}}
\put(-22,-7){\dashbox(14,14)}
\put(-8,0){\dashbox(16,0)}
\put(0,0){\makebox(0,0){$\bullet$}}
\put(15,0){\makebox(0,0){$1\frac56$}}
\put(8,-7){\dashbox(14,14)}
\put(2,4){\vector(1,2){11}}
\put(15,30){\makebox(0,0){$\bullet$}}
\put(15,30){\dashbox(8,0)}
\put(30,30){\makebox(0,0){$\frac23$}}
\put(23,23){\dashbox(14,14)}
\put(17,26){\vector(1,-2){11}}
\put(22,0){\dashbox(46,0)}
\put(30,0){\makebox(0,0){$\bullet$}}
\put(75,0){\makebox(0,0){$\frac56$}}
\put(68,-7){\dashbox(14,14)}
\put(82,0){\dashbox(38,0)}
\put(32,4){\vector(1,2){11}}
\put(45,30){\makebox(0,0){$\bullet$}}
\put(37,30){\dashbox(31,0)}
\put(75,30){\makebox(0,0){$\frac12$}}
\put(68,23){\dashbox(14,14)}
\put(47,34){\vector(1,2){26}}
\put(75,90){\makebox(0,0){$\bullet$}}
\put(75,90){\dashbox(38,0)}
\put(120,90){\makebox(0,0){$\frac13$}}
\put(113,83){\dashbox(14,14)}
\put(77,86){\vector(1,-2){26}}
\put(105,30){\makebox(0,0){$\bullet$}}
\put(82,30){\dashbox(31,0)}
\put(120,30){\makebox(0,0){$\frac13$}}
\put(113,23){\dashbox(14,14)}
\put(107,26){\vector(1,-2){11}}
\put(120,0){\makebox(0,0){$\bullet$}}
\put(122,4){\vector(1,2){11}}
\put(135,30){\makebox(0,0){$\bullet$}}
\put(127,30){\dashbox(8,0)}
\put(137,34){\vector(1,2){26}}
\put(165,90){\makebox(0,0){$\bullet$}}
\put(127,90){\dashbox(38,0)}
\put(167,94){\vector(1,2){86}}
\put(255,270){\makebox(0,0){$\bullet$}}

\end{picture}
\end{center}

\caption{An illustration of an algorithm to cross a desert of
$1\frac12$ units.} 

\end{figure}

The above algorithm is illustrated in figure \ref{jeepexamp}.

Both round trips are indicated by a double jeep (steps 1.\@ and 
2.) and a single jeep indicates the outward trip (step 3.). Single 
and double jeeps play an important role in the next sections.

Dealing with jeep problems, I decided that it is a better idea
to let the jeeps ride from right to left, since solving jeep
problems often requires to view things reversely. As a compromise,
the jeep in figure \ref{jeepexamp} rides from below to above.
The reader can turn the manuscript a quarter in the desired direction.
The amount of fuel of depots is given in dashed boxes.

The jeep problem is solved in \cite{fine} and \cite{dozen}.
Additionally, O. Helmer solved the jeep problem in a series of two
papers according to M. Pollack in \cite{delivery}.
In \cite{crossing}, the problem of crossing a desert of
2 units is considered, but the solution is
essentially that of the general problem. 

A variant of the
jeep problem that was considered by O. Helmer as well 
is the so called round trip jeep problem,
where the jeep has to return to the border of the desert
after crossing the desert. An obtainable solution can be found in 
\cite{dozen}, except that some flaws are corrected one year later.

The above algorithm is {\em normal} in the sense that no
abstract formulations are used and consists of two round trips 
(1.\@ and 2.) 
and one outward trip (3.) into the desert. Instead of doing
these trips after each other, one can also do them at the same 
time, provided 3 jeeps are available. This way we get a convoy
formulation of the problem. C.G. Phipps reasoned that
a round trip jeep can be seen as an outward trip jeep that
consumes twice as much fuel, see \cite{phipps} and \cite{return}. 
We will discuss this view in section \ref{convoy}. 

D. Gale solved the jeep variant with more jeeps involved, but
did not use the convoy formulation of Phipps, since the argument 
that any jeep algorithm can be seen as a convoy algorithm with all
jeeps traveling together seems to be quite incomplete, see 
\cite{dozen}. That is why in \cite{roundtrip}, the authors refer
to \cite{dozen} for the solution of the round trip jeep problem.
Since Phipps' ideas were brilliant, he deserves much better. That is
why this article is titled as it is.

\section{PHIPPS' CONVOY OF JEEPS} \label{convoy}

Consider the normal algorithm for crossing a desert of $1 \frac12$
units in section \ref{intro}. If it is done with three jeeps
such that all riding jeeps are at the same position all the time,
we get a {\em normal convoy algorithm}.

We get the {\em forward convoy formulation} if we add the 
return trips of the round trip jeeps to the outward trips.
In this formulation, round trip jeeps use twice as much fuel 
per unit as outward trip jeeps, since they pass each position
between $0$ and their farthest point from both directions.

So we can see the round trip of a normal jeep as a single 
trip with a {\em double jeep}: a jeep that can carry two tankloads
of fuel and uses two tankloads per unit. 
The amount of fuel at some moment $t$ in a double jeep is
$o + (1 - r)$, where $o$ is the amount of fuel in the corresponding
moment in the outward part of the round trip and $r$ is the amount
of fuel in the corresponding moment in the return part of the outer 
trip. At the farthest position of a round trip, $r$ equals $o$,
so the double jeep finally keeps one tankload of fuel in its tank.

We get the following {\em forward convoy formulation} of the algorithm 
in section \ref{intro}.
\begin{enumerate}

\item Ride to $\frac16$ with two double jeeps and a single jeep,
      taking $4 \frac56$ tankloads of fuel from position $0$. 
      At $\frac16$, 4 tankloads of fuel remain. Transfer fuel 
      of one double jeep to the other jeeps, such that the other 
      jeeps get full fuel tanks.

\item Ride to $\frac12$ with the full double jeep and the single jeep,
      leaving the other double jeep with 1 tankload at $\frac16$. 
      At $\frac12$, 2 tankloads of fuel remain. Transfer fuel of the 
      remaining double jeep to the single jeep, such that the single 
      jeep gets a full tank.

\item Ride to $1 \frac12$ with the single jeep, leaving the remaining
      double jeep with 1 tankload at $\frac12$.

\end{enumerate}
Notice that the jeeps of the above forward convoy formulation 
correspond to the steps of the original formulation in section 
\ref{intro}.
In the forward convoy formulation, 2 tankloads more are used, 
but these tankloads are still in the double jeeps, since they
finally keep one tankload of fuel each. So in fact, double 
jeeps that are used start with one tankload of additional fuel,
which has to be restored finally.

It is not important where the fuel is at some moment
that the convoy is progressing into the desert. Since the jeeps
that are still in the convoy are all together, fuel can be 
exchanged as soon as one jeep gets empty.
But for making a jeep algorithm without convoy from the above 
convoy algorithm, it is crucial that the jeeps are ordered, 
with the single jeep highest in order, such that fuel is only be 
transferred from lower to higher jeeps. 

In the above algorithm, the double jeep 
first left is the lowest jeep. The order of jeeps corresponds
to the order in time of the trips from $0$ from the original 
jeep without convoy in section \ref{intro}. If the jeeps can not 
be ordered as above, then in the corresponding normal algorithm
for one jeep, the jeep must use fuel 
from a depot that is not carried yet to the depot, which is 
impossible.

If fuel is transferred to a double jeep, half of it is used in 
the outward part and the other half in the return part of the round
trip in the corresponding formulation without a convoy.

Instead of questioning how much fuel is required to cross a desert,
we can also question how far the jeep can get with some amount
of fuel. Both problems are essentially the same.

The above forward convoy formulation can easily be generalized 
to arbitrary amounts of initial fuel at position $0$. We get the 
following general convoy formulation.
\begin{enumerate}

\item Take one single jeep and $n_2 := \lceil x \rceil - 1$ double jeeps, where
      $x$ is the initial amount of fuel at $0$. Fill all jeeps
      completely with fuel, except the double jeep that is lowest 
      in order: fill that jeep with $x - \lceil x \rceil + 2$
      tankloads of fuel. The total amount of fuel is now $1 \cdot 1 + 
      (\lceil x \rceil - 2) \cdot 2 + 1 \cdot (x - \lceil x \rceil + 2) =
      x + (\lceil x \rceil - 1) = x + n_2$.

\item If the single jeep is the only jeep that is remained in the
      convoy, then ride into the desert with that jeep until there
      is no fuel left. Otherwise, ride into the desert with the 
      convoy until the total amount of fuel of the convoy becomes 
      an even integer. 
      Transfer all fuel above one tankload of the lowest jeep that 
      is still in the convoy to the other jeeps, which will get 
      completely filled. Remove the lowest jeep from the convoy 
      and repeat this step.

\end{enumerate}
At the end, there are $n_2$ tankloads of fuel left: one tankload in each 
double jeep. So $x$ tankloads of fuel are used.

Instead of using the forward convoy formulations as suggested 
by C.G. Phipps, we will use {\em backward convoy formulations}
in the remaining of this article. The convoy is now
riding from position $d$ at the other side of the desert to 
position $0$. The convoy starts with one single jeep, and riding
back to $0$, double jeeps are added to the convoy. The amount
of fuel $f$ in some jeep in the backward convoy formulation 
at some moment corresponds to the amount of emptiness of
the same jeep at the corresponding moment in the forward
convoy formulation (i.e.\@ $1-f$ tankloads of fuel for a single 
jeep and $2-f$ tankloads for a double jeep).

The backward convoy formulation with $d = 1 \frac12$ is as follows:
\begin{enumerate}

\item Create a full single jeep at $1 \frac 12$ and ride 
      to $\frac12$. The single jeep is now empty.

\item Create a double jeep at $\frac12$ with one tankload of fuel. 
      Transfer $\frac13$ tankload of this fuel to the single 
      jeep. Ride with both jeeps to $\frac16$. Both jeeps are 
      now empty.

\item Create a second double jeep with one tankload of fuel. Transfer
      $\frac13$ tankload of fuel to the other double jeep and another
      $\frac13$ tankload of fuel to the single jeep. Ride with all
      jeeps to $0$.

\end{enumerate}
We see that in the backward convoy formulation, fuel is transferred
from lower to higher jeeps as well. This is because both time and
amount of fuel are inverted.

Further, we see that in step 2.\@ the relative amount of fuel 
in both jeeps is the same after the transfer (i.e.\@ a third).
Therefore, both jeeps are empty after the same amount of units
from the transfer. However, in step 3.\@, the absolute amounts
of fuel becomes the same due to the transfers, but not the relative
amounts of fuel. The single jeep gets relatively twice as much
as the other jeeps. For that reason, the single jeep arrives
at $0$ with $\frac16$ tankload of fuel, while both double jeeps
arrive empty. Each jeep starts with one tankload of fuel
at creation, so $3 - \frac16 = 2 \frac56$ tankloads are needed.

We generalize the backward convoy formulation now.
\begin{enumerate}

\item Create one single jeep at $d$ and ride to $0$.

\item If the convoy gets out of fuel, create a double jeep with
      one tankload of fuel. Distribute the tankload of fuel such that
      each jeep of the convoy gets the same relative amount of 
      fuel. Advance to $0$ with the convoy and repeat this step.

\end{enumerate}

We see that the backward convoy formulation is quite short. 
Further, we see that 2.\@ of the above is in fact an event-handler
of the algorithm. We shall formulate forthcoming algorithms by way of
event-handlers, since it seems more natural to do so.

An algorithm that is similar to a forward or backward
convoy formulation is called a {\em forward} or {\em backward convoy 
algorithm} respectively.
So each forward or backward convoy formulation is a forward 
or backward convoy algorithm respectively, but a forward or backward 
convoy algorithm does not need to be derived from a normal (convoy) 
algorithm.

We call an algorithm in which one or more jeeps are involved
a {\em Phipps algorithm} or an algorithm of Phipps' type, 
if each jeep only changes direction from backward to 
forward at position $0$.

\begin{proposition} \label{convoyform}
A backward convoy algorithm can be formulated as a normal convoy
algorithm of Phipps' type and vice versa.
\end{proposition}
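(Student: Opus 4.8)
The plan is to prove the equivalence by exhibiting an explicit time-and-fuel reversal that carries one type of algorithm into the other, and then checking that the defining structural conditions are preserved in both directions. The excerpt has already told us the dictionary: in a backward convoy formulation the amount of fuel $f$ carried by a jeep at a given moment equals the \emph{emptiness} $1-f$ (for a single jeep) or $2-f$ (for a double jeep) at the corresponding moment of the forward formulation, and both the time axis and the fuel amounts are inverted. So the construction is essentially forced; the work lies in verifying that the inversion is well defined and respects the constraints.

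First I would set up the correspondence precisely. Given a backward convoy algorithm, a jeep is ``created'' with one tankload at some position and rides toward $0$; I would reverse time so that creation at position $p$ becomes the moment at which, in the forward picture, the jeep reaches its farthest point $p$ and turns around. Under the emptiness-to-fuel conversion, a jeep created full (one tankload) at its turning point corresponds in the forward convoy to a double jeep that retains exactly one tankload at that point, which is precisely the behaviour of a round-trip (double) jeep described earlier in section \ref{convoy}. The single jeep, which in the backward formulation is created first at $d$, becomes the outward-only jeep that travels the whole desert. I would verify that fuel consumption rates match: a double jeep burning two tankloads per unit forward corresponds, after the $2-f$ emptiness inversion, to the correct fuel profile backward, and similarly $1-f$ for the single jeep.

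Next I would check the two structural requirements that distinguish these algorithm classes. For the forward-to-backward direction, the key fact established in the excerpt is that fuel is transferred only from lower to higher jeeps, with the single jeep highest in order; under time reversal a transfer from a lower to a higher jeep becomes a transfer in the opposite temporal sense but, because fuel and emptiness swap roles, it remains a transfer from lower to higher jeeps (the excerpt explicitly observes this: ``both time and amount of fuel are inverted''). I would make this bookkeeping precise and confirm it preserves the Phipps property, namely that each jeep reverses direction from backward to forward only at position $0$: a forward jeep that turns around at its farthest point maps under reversal to a backward jeep that is simply created there and never turns, while the single jeep never reverses at all, so the reversed algorithm indeed has all direction changes at $0$.

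The main obstacle I expect is not the arithmetic of the fuel inversion but the \emph{ordering and consistency} argument: I must show that the lower-to-higher transfer condition, which makes a forward convoy algorithm reducible to a genuine single-jeep normal algorithm, is exactly equivalent after inversion to the legality of the backward creation-and-distribution events. Concretely, the danger is that the reversed process might demand fuel from a depot not yet established, and the excerpt's remark about this impossibility (``the jeep must use fuel from a depot that is not carried yet'') is precisely the invariant I need to transport across the reversal. I would therefore phrase the argument as a bijection between ``trip from $0$ in time order'' in the normal formulation and ``jeep creation in reverse time order'' in the backward formulation, and check that this bijection is order-preserving in the sense required, so that each side's legality condition implies the other's. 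Once that equivalence of constraints is nailed down, both implications of the ``and vice versa'' follow by applying the same involution twice.
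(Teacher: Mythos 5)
Your dictionary (time reversal plus fuel/emptiness inversion) is the right starting point, and you correctly locate the danger: the reconstructed physical algorithm might ``demand fuel from a depot not yet established.'' But your proposed resolution does not close that gap, and closing it is the entire content of the hard direction. Your inversion only carries a backward convoy algorithm to the \emph{forward convoy formulation}, in which double jeeps are still abstract bookkeeping devices; the proposition asks for a \emph{normal} convoy algorithm, i.e.\ physical jeeps making physical round trips, where fuel can change hands only between co-located jeeps or through depots on the ground. The paper supplies exactly the three ingredients your plan lacks: (i) a splitting rule --- a double jeep holding $2r$ tankloads in the backward convoy becomes a return trip with $r$ tankloads plus an outward trip with $1-r$ tankloads; (ii) a schedule --- all outward trips ride together as a convoy, each round-trip jeep dropping out at its farthest point, after which the farthest round-trip jeep rides back, collecting the others; and (iii) a depot argument --- transfers coupling trips that occur at different times are realized by depots which are temporarily allowed to go negative (``borrowing''), and borrowing is then eliminated by observing that it can only be forced when a jeep dumps fuel on its return trip (a jeep returning non-empty to $0$ reduces to this case), and such dumps are never needed, since the dumped fuel would have been carried back from farther out in the desert while all fuel originates at the border.

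Moreover, the invariant you propose to transport --- fuel flowing only from lower to higher jeeps --- is the wrong tool here. That ordering condition is what permits serializing a convoy into a genuine \emph{single-jeep} normal algorithm; a general backward convoy algorithm, which by definition need not be derived from any normal algorithm, need not satisfy it, and yet the proposition still holds for it. So the ``equivalence of legality conditions'' you hope to check along your bijection can simply fail, and the proof must instead construct the physical realization directly and remove the borrowing, as above. For the same reason the two implications are not obtained ``by applying the same involution twice'': the two classes of algorithms are not symmetric, because time and physical fuel placement constrain the normal side but are eliminated on the backward side; the normal-to-backward direction is the easy reformulation already carried out in section 2, while the backward-to-normal direction requires genuinely new work.
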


\begin{proof}
In this section, we reformulated a single jeep algorithm of
Phipps' type as a backward convoy algorithm. More generally,
we can reformulate any Phipps algorithm as such. This completes
the vice versa part of this proposition.

Suppose we have a backward convoy algorithm. If we wish to 
reformulate it as a normal convoy algorithm, then we must specify
how double jeeps are split in outward trips and return trips. 
Further, we must ensure that fuel can be exchanged between jeeps. 
This must be done by way of depots on the ground.

First, we formulate a normal convoy algorithm where jeeps
may `borrow' fuel from the ground. So some positions might contain 
a negative amount of fuel temporarily. After that, we remove the
`borrowing'.

We see a double jeep with $2r$ tankloads of fuel in a backward
convoy algorithm as a return trip with $r$ tankloads of fuel in the
jeep's tank and an outward trip with $1-r$ tankloads of fuel in the
jeep's tank. 

We organize the normal convoy algorithm with `borrowing' as follows. 
First, all jeeps make their outward trip, each jeep leaving 
the convoy at the farthest position it reaches. After that, 
the farthest round trip jeep returns to $0$, taking the other 
round trip jeeps with it along the way.

Notice that `borrowing' might only be needed if a jeep dumps
fuel in the return trip (to neutralize a negative depot) or a jeep 
does not return empty at position $0$. The latter can be reduced to the 
former by demanding that jeeps that return at position $0$ dump all
their fuel after their return.

If fuel is dumped at position $x$ in a return trip, the 
dumped fuel comes from a farther position of the desert. 
But all fuel originally comes from the desert border, so
there is no need to dump fuel in the return trips. This
gives the desired result.
\end{proof}

\section{AN OPTIMAL ALGORITHM FOR PHIPPS' JEEP PROBLEM}

Phipps considered the problem of one jeep having to cross the
desert, supported by `helper jeeps'. The helper jeeps and possibly the
crossing jeep as well may be obliged to return to the desert border. We
consider the problem of $n$ jeeps having to cross the desert, of which $n_2$
jeeps do and $n_1 = n - n_2$ jeeps do not have to return to the desert border
eventually. The jeeps are supported by $m$ helper jeeps of which $m_1$ jeeps
do not need to return.

Phipps did not include depots to be filled in his
algorithm explicitly, but remarked that such additions can be made.
We formulate a so called extended backward convoy algorithm with such 
depots. In an {\em extended backward convoy algorithm}, we allow double jeeps 
with one tankload of fuel to disappear, in addition to the
rules of a backward convoy algorithm. Such a disappearance corresponds to a
change of direction from backward to forward at a position other than $0$, as
we will show in the next section. 
Single jeeps do not disappear in an extended convoy
algorithm, which is not very amazing since such a disappearance
would correspond to a jeep starting from that position instead of
position 0. This is impossible within the context of Phipps' jeep problem.

\begin{algo} \label{Finejeep}
Start with a convoy at position $d$ with $n$ jeeps initially, of which $n_1$
single jeeps and $n_2$ double jeeps, all with one tankload of fuel. Transfer fuel 
from the single jeeps to the double jeeps such that each jeep gets the same 
relative amount of fuel. If a depot has to be filled on position 
$d$, then call the handler of event 1 first. After that, ride 
to position $0$ with the whole convoy.

\begin{em}
Event 1: The convoy meets a position where a depot has to be filled.
\end{em} \\
{\em Handler:}
Do the handler of event 2 as many times as required in order to
get the amount of tank fuel larger than the amount of fuel the depot needs
(without advancing to the desert border).
After that, use fuel to fill the depot. Advance to the desert border, with
each jeep having the same relative amount of fuel.

\begin{em}
Event 2: The convoy runs out of fuel.
\end{em} \\
{\em Handler:} 
If the number of single jeeps is $n_1 + m_1$, then create a double jeep with
one tankload of fuel. Otherwise, create a single jeep with one 
tankload of tank fuel. Distribute the tankload of fuel among the convoy such that 
each jeep gets the same relative amount of fuel and advance to the desert 
border.
\end{algo}

In the rest of this section, we will prove that algorithm \ref{Finejeep}
is optimal. This seemed to be clear for Phipps, but D. Gale missed some 
arguments in Phipps' article. The largest problem is that jeeps may change 
direction from backward to forward at positions other than $0$. Below we show 
that such changes of direction correspond to cancelations of double jeeps.
 
\begin{proposition}
Any algorithm with one or more jeeps can be formulated as an
extended backward convoy algorithm.
\end{proposition}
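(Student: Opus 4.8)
The plan is to reduce to Proposition \ref{convoyform}. That proposition already equates Phipps algorithms with backward convoy algorithms, so the only new phenomenon I must capture is a jeep that turns from backward to forward at a position $x>0$ --- precisely the feature distinguishing a general algorithm from a Phipps algorithm, and precisely what the extra rule of an extended backward convoy algorithm is designed to encode. Since any algorithm has only finitely many such turns, it suffices to represent each of them by the disappearance of a double jeep and otherwise invoke Proposition \ref{convoyform}.

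To do this I would decompose each physical jeep's trajectory $x(t)$ into virtual jeeps by horizontal layers. Following a level $h$ from the top down, the set $\{t : x(t)>h\}$ gains a component each time $h$ passes a local maximum of $x(t)$ and loses one (two components merge) each time $h$ passes a local minimum. I would assign one virtual jeep to each such component: a virtual double jeep is born at the peak it reaches and persists down to its base level, where it either descends all the way to $0$ or is absorbed at a positive local minimum. A non-returning physical jeep contributes a single jeep on its bottom layer, while every stacked layer is a round trip and hence a double jeep. The point of this tiling is that it adds no travel: the virtual jeeps exactly cover the excursion of $x(t)$, so the reformulation is faithful.

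Under the fuel-and-time inversion of Section \ref{convoy}, reading the trajectory from $d$ towards $0$, a local maximum of $x(t)$ becomes the creation of a double jeep at its farthest point, while a positive local minimum becomes a double jeep based above $0$ that therefore disappears there instead of continuing to the border. A local minimum at $0$ is a legal Phipps turn and needs no disappearance, and a single jeep never arises on a stacked layer and so never disappears --- exactly the constraints built into the definition of an extended backward convoy algorithm. Fuel exchanges between the virtual jeeps of one physical jeep take place while those jeeps are co-located, and depots are handled by first permitting jeeps to borrow fuel from the ground and then removing the borrowing, just as in the proof of Proposition \ref{convoyform}.

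The main obstacle is the local bookkeeping at a disappearance: I must verify that the vanishing double jeep holds exactly one tankload at the moment it is absorbed and that the fuel it carried is consistently taken over by the enclosing lower-layer jeep, so that the total fuel, the per-jeep supply, and the depot demands all balance. This is the precise form of the correspondence between disappearing double jeeps and backward-to-forward turns, which the next section establishes and on which the final step of the argument rests.
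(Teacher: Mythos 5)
Your layer decomposition is sound, and it is in fact the paper's own construction in different clothing: the paper iteratively cancels \emph{forward loops} (a round trip from a position $p$ to a farther position $q$, taken with $p$ minimal) and replaces each one by a one-way backward ride of a double jeep from $q$ to $p$; your level-set layers produce exactly these excursions, each positive local minimum becoming a disappearance, each peak a creation, and the residual trajectory being of Phipps' type so that the machinery of Proposition \ref{convoyform} applies. Your treatment of depots (borrow from the ground, then observe the borrowing can be removed) also matches the paper, which notes that underflows cannot persist because the final convoy passes each position only once.

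The genuine gap is your last paragraph. You correctly isolate the crux --- that the vanishing double jeep must hold exactly one tankload and that the enclosing jeep must inherit a consistent amount of fuel --- but you then defer it to ``the next section,'' which is circular: this proposition \emph{is} the statement that the earlier section promised to prove, so there is no outside result to appeal to. Worse, the one-tankload condition is not an automatic property of your virtual jeeps that could be ``verified'' from the decomposition alone: at a creation (a peak) the backward double jeep does start with one tankload, because the up-pass and down-pass fuels agree there, but at a positive local minimum they need not agree, so the condition must be \emph{arranged} by a fuel exchange, and one must check that this exchange is consistent on both sides. The paper does this in two lines: if the physical jeep enters the dying excursion with $o$ tankloads and leaves it with $r$, then under the fuel inversion the backward double jeep reaches the merge level with $1-o+r$ tankloads; canceling the excursion forces the enclosing jeep's fuel to change by $r-o$, and exchanging exactly that amount leaves the double jeep with $1-o+r-(r-o)=1$ tankload, as the disappearance rule requires, while the enclosing jeep continues with precisely the fuel it had in the original algorithm. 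Without this computation your argument stops exactly where the work is.
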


\begin{proof}
Call a round trip starting at one position $p$ to a farther position $q$
a {\em forward loop (from $p$ to $q$)}. So exactly $2(q-p)$ units are 
traveled in a forward loop from $p$ to $q$. 

The reformulation is quite similar to that in proposition \ref{convoyform}
in section \ref{convoy}. Suppose that we have a 
normal algorithm with a forward loop. Take a maximal forward loop, i.e.\@
take the starting position of the forward loop minimal. Cancel the
maximal loop from the algorithm and replace it by a one way trip of
a double jeep from $q$ to $p$, starting with one tankload of fuel at $q$.
After the double jeep arrives at $p$, fuel is exchanged between the
double jeep and the jeep from which the forward loop is canceled, such 
that the double jeep gets an amount of one tankload of fuel in its tank.

To show that this method works, we must show that after exchanging
fuel between the double jeep and the jeep from which the forward loop is 
canceled, the amount of fuel of the latter jeep is the same as it 
was in the original algorithm.
Suppose that the jeep started its maximal forward loop with $o$
tankloads of fuel and ended it with $r$ tankloads of fuel. In order
to get the right amount of fuel, the jeep from which the forward loop
is canceled should get rid of $o - r$ tankloads of fuel if $o > r$
and similarly should get $r - o$ tankloads of fuel if $o < r$. Since
the amount of fuel of the double jeep is $1 - o + r$ just before
exchanging fuel with the jeep from which the forward loop is canceled,
the above is satisfied.

A problem of this construction is that fuel can not be exchanged,
in the sense that some positions might contain a negative amount of
fuel temporarily. This problem will disappear at the end of this
reformulation, since then we have an extended backward convoy algorithm 
where each position is passed only once by the convoy, and therefore 
temporary underflows of fuel at positions can not occur.

Remove maximal forward loops in the above way until they do not
exist any more. The remaining of the original algorithm is now
in fact of Phipps' type and can be transformed to a backward convoy
formulation as described in section \ref{convoy}. The above rides
of double jeeps can be inserted in the backward convoy, which yields
an extended convoy algorithm.
\end{proof}

\begin{theorem}
Algorithm \ref{Finejeep} is optimal.
\end{theorem}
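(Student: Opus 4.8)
The plan is to prove optimality by establishing a lower bound that matches the fuel consumption of Algorithm~\ref{Finejeep}, and the cleanest route is to work entirely within the backward convoy framework, which the preceding proposition guarantees captures every possible jeep algorithm. Concretely, I would argue: given an arbitrary algorithm for the stated problem (with $n_1$ non-returning and $n_2$ returning crossing jeeps, plus $m$ helpers of which $m_1$ do not return), reformulate it as an extended backward convoy algorithm. Optimality then amounts to showing that among all extended backward convoy algorithms achieving the required crossings and depot-fillings, Algorithm~\ref{Finejeep} uses the least total fuel. Since fuel use is measured at position~$0$, and since the backward convoy runs from $d$ to $0$ creating jeeps exactly when the convoy empties, the natural quantity to track is the total fuel as a function of convoy position.

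The key structural fact to exploit is the greediness built into the event-handlers. First I would set up the invariant that, at every position, all jeeps still in the convoy carry the same \emph{relative} amount of fuel (the redistribution step enforces this). This makes the convoy behave like a single aggregate vehicle whose instantaneous fuel-consumption rate per unit distance equals the total carrying commitment still present, namely $(\text{single jeeps})\cdot 1 + (\text{double jeeps})\cdot 2$. I would then show that the rule in the Event~2 handler — create a double jeep once the number of single jeeps has reached $n_1 + m_1$, and otherwise create a single jeep — minimizes the integrated consumption rate, because deferring the creation of the high-consumption double jeeps to as late (as close to $d$, i.e.\ as far into the already-fueled region) as possible, while front-loading single jeeps, keeps the heavy two-tankload consumers active over the shortest possible stretch of desert. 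Any deviation either violates a return constraint or forces a double jeep to be carried over a longer portion, strictly increasing fuel use.

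Next I would handle the depots via the Event~1 handler. The requirement that enough jeeps be created to exceed the depot demand before filling, together with re-equalizing relative fuel afterward, should be shown to dump fuel into a depot at the latest possible moment and with the minimal convoy size capable of supplying it; I would argue any algorithm filling a depot earlier or with a larger convoy incurs extra transport overhead that the matching lower bound rules out. For the extended algorithm, I would also account for the disappearing double jeeps (corresponding to forward-to-backward direction changes away from~$0$): each such disappearance leaves exactly one tankload behind, and I would verify this bookkeeping is consistent with the one-tankload-per-created-jeep accounting established in Section~\ref{convoy}, so that the total fuel used equals $d$-dependent sums over creation positions with no waste.

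The main obstacle I anticipate is making rigorous the claim that \emph{greedy is optimal} — i.e.\ that creating jeeps exactly at the emptying positions, in the prescribed single-then-double order, dominates every alternative. The subtle point is that a competing algorithm might create jeeps \emph{before} the convoy empties or redistribute fuel unequally, and one must show no such freedom helps. I would address this by an exchange/perturbation argument: take any optimal extended backward convoy algorithm, and show that it can be modified step by step — shifting each jeep's creation to the point where the convoy would otherwise run dry, and equalizing relative fuel — without increasing total fuel, eventually transforming it into Algorithm~\ref{Finejeep}. The delicate bookkeeping lies in verifying that these exchanges respect the return constraints ($n_2$ and $m-m_1$ jeeps must end at~$0$ with the tankload needed to get back) and the depot constraints simultaneously; once that monotonicity is pinned down, optimality follows because the algorithm's fuel cost is exactly the lower bound.
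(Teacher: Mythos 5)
Your proposal follows essentially the same route as the paper's proof: invoke the preceding reformulation proposition to restrict attention to extended backward convoy algorithms, then argue that in an optimal such algorithm jeep creation can be postponed until the convoy runs out of fuel and that single jeeps should be created before double jeeps, which forces coincidence with Algorithm~\ref{Finejeep}. The paper compresses this into three sentences where you flesh it out as an exchange argument (note one slip: deferring double-jeep creation in the backward convoy means creating them as close to $0$, not as close to $d$), but the skeleton is identical.
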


\begin{proof}
Consider an optimal extended backward convoy algorithm $S$. We may assume that
in $S$, jeeps are only added to the backward convoy if the backward convoy 
runs out of fuel, since postponing adding a jeep can only save fuel.
Furthermore, single jeeps should be added first, since they use less fuel than 
double jeeps. This way we get algorithm \ref{Finejeep}.
\end{proof}

Now you read the above solution of the jeep problem, you could think 
the following: the backward convoy is funny, but the same results can
be established without it with half as many pages. This might be true, but 
I think that the backward convoy is a concept that is worth being 
displayed extensively, for both scientific and historical reasons. 

If there are depots with fuel to be used in the desert, then there are
several complication that might play up. If the amount of fuel of the convoy 
does not get larger than one tankload, then nothing serious happens. 
But if the convoy gets more than one tankload, then there is enough fuel to
cancel a double jeep, but this is only possible if there are more than $n_2$
double jeeps in the convoy. 

For that reason, it is no longer true that
single jeeps must be added first to the convoy and double jeeps
after that. It is neither the case that the convoy should start with $n_1$
single jeeps: it should start with $n$ jeeps of which at least $n_2$
double jeeps. With these adaptation, one can formulate an algorithm
where the types of jeeps are undetermined, and thus we get a {\em
nondeterministic algorithm}. But one can prove that such an algorithm is
optimal, which means that it is optimal for some way of chosing jeep types, as
long as the convoy is able to absorb all fuel to be used for all ways of 
chosing jeep types. 

However, if some depot contains more fuel than the backward convoy can 
absorb, then things get really harder. An idea is to create new jeeps in order to 
enlarge the fuel capacity of the backward convoy. But such a backward convoy 
algorithm can not be transformed to a normal algorithm in general. 
The problem is that the jeeps cannot take advantage of the depot fuel before
it is reached by some jeep. In a convoy formulation, time is in fact 
eliminated, whence this problem is not taken into account.

In \cite{jeap}, the case of a single jeep having to cross the desert with
arbitrary depots of fuel to be used and to be filled is solved.
The backward convoy is split in two parts there: one before and one after 
reaching the large depot of fuel. This problem is strongly related to Gale's 
round-trip problem, and generalizes Theorem 1 of \cite{roundtrip}. In the same 
article, another problem is formulated, which will be discussed in the next section.
This problem involves a so-called {\em Dewdney jeep}. Other problems with 
Dewdney jeeps are studied in \cite{jeap} as well. 

In \cite{roundtrip}, another jeep problem is formulated as well.
This jeep variant is also known as Klarner's camel-banana problem.
Now, the jeep is a camel that needs to eat a banana every unit and can carry 
one banana on its back.
It is solved in \cite{cb} and \cite{logistics}.
Only the outward case is considered in both references.

\section{MADDEX' JEEP PROBLEM}

Maddex' jeep problem is the following. Again, we have a jeep that must cross
the desert. This jeep has a fuel tank of one tankload. In addition, it can
carry $B$ cans of $C$ tankloads of fuel. It may make temporary depots, but such
depots must be made of cans and only can fuel may be used to fill them. So
the jeep's tank may be filled with fuel from a can, but not vice versa.

At the desert border, there is an unlimited amount of fuel, but there is only
a finite number, say $N$, of cans. These cans can be filled at the desert
border. We solve both the outward trip case and the round trip case of 
Maddex' problem, so there is only one jeep, a so-called {\em Dewdney jeep},
and there are no additional depots to be made or used in the desert.

Maddex' round trip jeep problem is not very hard. Let $D_N = 
(N \cdot C + 1)/2$. After leaving the fuel station, the jeep
can ride only $N \cdot C + 1$ units before it must return to
the fuel station, since the total capacity of all cans and the
jeep's own fuel tank together is $N \cdot C + 1$. So the jeep
cannot get farther than $D_N$. The following proposition shows 
that this upper bound can be achieved.

\begin{proposition}
With $N$ cans, the jeep can dump a full can at $D_{N-1}$, without
using fuel of this can, and eventually return to the fuel station, 
without leaving one of the other cans somewhere in the desert.
\end{proposition}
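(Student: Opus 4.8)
Looking at this proposition, I need to understand what's being claimed.The plan is to prove the assertion by induction on the number of cans $N$, exploiting the identity $D_{N-1}=D_{N-2}+C/2$, which expresses that one extra can should buy exactly $C/2$ of additional reach for a \emph{full} can. I would note first that the fuel budget is tight: a round trip to $D_{N-1}$ costs $2D_{N-1}=(N-1)C+1$ tankloads, which is precisely the fuel contained in the jeep's tank together with $N-1$ of the cans. Hence the single can deposited at $D_{N-1}$ must genuinely never be touched, and essentially no fuel may be wasted. When $B\ge N$ this already settles matters in one trip: load the tank and all $N$ cans, drive out to $D_{N-1}$ drawing only from $N-1$ of the cans so that one can $P$ stays full, deposit $P$, turn around, and drive home drawing from those same cans; the count $1+(N-1)C=(N-1)C+1$ shows the fuel runs out exactly at the station while every can but $P$ returns. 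The base case $N=1$ is this trip with $D_0=\tfrac12$. The interesting case is $B<N$, where the cans cannot all be carried at once and caching is forced.

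For the inductive step I would designate one can $P$ as \emph{precious} (destined for $D_{N-1}$) and regard the other $N-1$ cans as workers. First apply the proposition for $N-1$ to the worker cans: this deposits one full worker can, the \emph{relay}, at $D_{N-2}$ and returns the remaining $N-2$ workers (empty) to the station, leaving no worker in the desert. It then remains to transport $P$ the extra $C/2$, i.e.\ from the station out to $D_{N-1}=D_{N-2}+C/2$, and to bring the jeep and the relay home. I would ferry $P$ forward in the standard staged manner, building fuel caches for the long stretch $[0,D_{N-2}]$ out of the $N-2$ idle workers and topping up the tank from the relay for the final leg $[D_{N-2},D_{N-1}]$; once $P$ is dropped at $D_{N-1}$ the carrying slot it occupied is freed, so on the return the jeep can collect the relay and each cache in turn and shuttle them back to the station.

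The main obstacle is the can bookkeeping rather than the fuel arithmetic. One must schedule the transfers so that (i) $P$ is never tapped, which is immediate since $P$ occupies a carrying slot and is simply never opened, and (ii) no worker can is abandoned, which is delicate because the jeep holds at most $B$ cans and fuel may flow only from a can into the tank, never back. The awkward moments are the final pickups: after the relay and caches have been drained they are still physical cans sitting in the desert, while the jeep may already have its slots occupied. Here I would use that an \emph{empty} can standing within round-trip tank range can always be fetched by an otherwise empty jeep, and that farther empty cans can be retrieved by rebuilding short caches from cans already recovered, drawing on the unlimited fuel at the station; since there are only finitely many cans, this retrieval terminates. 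Confirming that the $N-2$ idle workers plus the relay really suffice to carry $P$ out to $D_{N-1}$, that the retrieval trips never strand a new can, and that the schedule respects the carrying limit $B$ at every instant, is the part that needs the most care and where I expect the bulk of the routine-but-fussy verification to lie.
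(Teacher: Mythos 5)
Your strategy --- induction on $N$, planting a full ``relay'' can at $D_{N-2}$ via the induction hypothesis, caching the remaining workers along $[0,D_{N-2}]$, ferrying the precious can $P$ outward on tank fuel topped up at the caches, then retrieving the emptied cans by the mirror image of the dumping procedure --- is the same as the paper's, and for $C \le 1$ it goes through exactly as you describe: consecutive caches are $C/2 \le \frac12$ apart, and the final round trip from $D_{N-2}$ to $D_{N-1}$ and back costs $C \le 1$, within tank capacity.

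The genuine gap is that you never separate out the case $C > 1$, and there your plan fails outright rather than being ``routine-but-fussy.'' Take $B = 1$, which the proposition allows: on the final leg the jeep's only can slot is occupied by $P$, so the relay must stay behind at $D_{N-2}$; the round trip to $D_{N-1}$ and back costs exactly $C > 1$ tankloads; the tank holds only one tankload, $P$ may not be tapped, and fuel can never be poured from the tank back into a can to be staged ahead. Hence ``topping up the tank from the relay for the final leg'' cannot supply enough fuel, and for $C > 2$ even the one-way gaps of length $C/2$ between consecutive caches exceed full-tank range, so the outward ferry already breaks down. This is precisely the carrying-limit issue you deferred as verification; it needs a new idea, not bookkeeping. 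The paper's proof is split into the cases $C \le 1$ and $C > 1$: in the latter, the jeep always carries along the worker can it is currently draining (possible since that can is not $P$), while the precious can itself is advanced only half a distance unit at a time --- each half-unit shuttle costs at most one tankload of tank fuel --- with the borrowed cache fuel restored afterwards; to set this up the paper dumps two cans at $D_{N-2}$ instead of one. Your $B \ge N$ single-trip observation is correct and covers every $C$ in that regime, but it does not rescue the inductive step when $B < N$.
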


\begin{proof}
We show the case $C \le 1$ first.
By induction, it follows that the jeep can dump $N-1$ cans at 
$D_{N-2}, D_{N-3}, \ldots, D_1, D_0$, in this order, 
without using the last can. After doing this, the jeep rides
to $D_{N-1}$ with the last can filled, using fuel of the other cans.
At last, cans are retrieved in the order $D_0, D_1, \ldots, D_{N-3}, 
D_{N-2}$, which can be done in the same way as dumping full cans
at these positions. 

In case $C > 1$, the jeep cannot take a whole can in its tank, whence
the above is not possible. But the proposition is still valid, since the 
jeep can take the can from which he takes fuel along with it, except in case
it carries the can that is meant for $D_{n-1}$. But if it carries that can
half a distance unit at a time, then it can use tank fuel to do so.

Inductively dump two cans at $D_{N-2}$, and the other 
cans at $D_{N-3}, \ldots, \allowbreak D_1, \allowbreak D_0$. 
cans at $D_{N-2}$ half a unit farther with fuel of the other cans and 
restore the used fuel. Carry the can at $D_{N-2} + \frac12$ to $D_{N-1}$ or
another half a distance unit farther with the fuel of the other cans, etc.
\end{proof}

If $C > 1$ and $B \ge 2$, then cans do not need to be carried farther in
steps of half a unit, which the reader may show. We formulate a 
pseudo-algorithm for the outward case of Maddex' jeep problem.

\begin{algo} \label{Maddex}
Dump full cans at $D_{N-1}, D_{N-2}, \ldots, D_1, D_0$, eventually
returning at the fuel station. Start from the fuel station with
one tankload of tank fuel. Each time you meet a can, take the
fuel of it and advance. Advance as a single jeep if the total
amount of fuel of the jeep is $B \cdot C + 1$ at most. Otherwise,
advance as a triple jeep until the amount of fuel becomes
$B \cdot C + 1$ (which will be before the next can) and then advance
as a single jeep again.
\end{algo}

Now we have seen double jeeps, the triple jeeps in algorithm \ref{Maddex}
should not be a problem. The remainder of this section is devoted to show 
the optimality of algorithm \ref{Maddex}.

Let $t$ be the last moment that the jeep is at the fuel station.
Number the cans $1, 2, \ldots, N$ and let $x_i$ be the position
of can $i$ at moment $t$. Without loss of generality, we assume
that $x_N \le x_{N-1} \le \cdots \le x_2 \le x_1$. Let $c_i$
be the amount of fuel of can $i$ at moment $t$ for all $i$.

\begin{lemma} \label{Maddexup}
If $1 \le k \le N$, then
$$
2 \sum_{i=1}^k \max\{x_i - D_{N-i}, 0\} \le \sum_{i=1}^k (C - c_i)
$$
\end{lemma}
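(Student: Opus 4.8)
The plan is to read the right-hand side $\sum_{i=1}^k (C - c_i)$ as the total \emph{can fuel} that the $k$ farthest cans have given up by moment $t$, and to bound it below by the amount of can fuel that the jeep is \emph{forced} to burn in order to push these cans out to positions $x_1 \ge \cdots \ge x_k$. The starting observation is that $D_{N-i} = ((N-i)C+1)/2$ is exactly the farthest a jeep supported by $N-i$ full cans together with its one-tankload tank can reach on a round trip; this is the reach bound already established for $D_N$, applied with $N-i$ cans (the ``$+1$'' accounting for the tank). Since $t$ is the last moment the jeep is at the fuel station, every motion before $t$ decomposes into round trips beginning and ending at position $0$, and on each such trip the tank can be charged with at most one tankload of station fuel; all further fuel consumed on that trip is can fuel and, once burned, is gone, because fuel flows only from cans to the tank and never back.

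First I would set up a conservation/crossing argument at a moving checkpoint. For a position $p$, the fuel burned to the right of $p$ before $t$ equals the total right-of-$p$ distance travelled, which for a round trip of farthest reach $P>p$ is $2(P-p)$. Comparing the trips that carry cans $1,\dots,i$ beyond the level $D_{N-i}$ against the maximal reach that the smaller can-complement permits, I would extract that each unit by which $x_i$ exceeds $D_{N-i}$ costs $2$ units of can fuel drawn from the cans numbered $1,\dots,i$ themselves, the factor $2$ being the round-trip penalty and the $\max\{\cdot,0\}$ reflecting that no cost is incurred when $x_i \le D_{N-i}$.

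The reason the statement is phrased with partial sums over $k$, rather than term by term, is that the placement of can $i$ is subsidised not only by the closer cans $i+1,\dots,N$ but by the whole fuel budget carried past each threshold; a naive induction peeling off can $k$ fails, since $2\max\{x_k-D_{N-k},0\}\le C-c_k$ need not hold on its own. I would therefore aggregate over the checkpoints at the levels $D_{N-1}>D_{N-2}>\cdots$, using Abel summation against the sorted positions $x_1\ge\cdots\ge x_k$ so that the required rightward can-fuel transport telescopes into $2\sum_{i=1}^k \max\{x_i-D_{N-i},0\}$. This is a Hall-type, majorization-style bookkeeping in which all the partial-sum inequalities emerge simultaneously from a single flow inequality.

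The main obstacle I expect is the precise accounting of the \emph{unlimited} station fuel. Because the tank may be refilled at $0$ on every round trip, station fuel is not globally bounded, and the delicate point is to show that beyond the threshold $D_{N-i}$ this station contribution is already spent in ferrying the closer cans, so that the marginal cost of the excess $x_i-D_{N-i}$ falls on can fuel and hence on the deficits $C-c_i$. Making this rigorous requires pinning down, at each checkpoint, how much of the crossing fuel is attributable to station refills versus can fuel, and verifying that the thresholds $D_{N-i}$ are exactly where the ``free'' station subsidy runs out; once that correspondence is in place, the inequality follows by the summation described above.
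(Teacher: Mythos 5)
Your reading of the inequality is the right one (the right-hand side as the fuel deficit of the $k$ farthest cans, $D_{N-i}$ as the round-trip reach afforded by $N-i$ cans plus the tank, the factor $2$ as the round-trip penalty), but the two steps that actually carry the proof are missing, and the one irreversibility claim you do justify is justified by a false premise. You write that fuel drawn from the far cans, ``once burned, is gone, because fuel flows only from cans to the tank and never back.'' That only rules out restoring a can \emph{from the tank}; Maddex' rules explicitly allow can-to-can transfers (depots must be made of cans and may be filled with can fuel), so the jeep may refill a far can from another can brought out later, and then $\sum_{i=1}^k (C - c_i)$ no longer bounds the can fuel burned beyond the thresholds. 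The paper closes exactly this hole with an interchange argument: whenever a can that has been hauled past $x_k$ (never to return below $x_k$ before moment $t$) is later partially refilled by a second can, one may swap the roles of the two cans at the refill, so without loss of generality the refilling can also stays beyond $x_k$; hence fuel spent from this ``committed'' pool is never restored from outside the pool. Nothing in your proposal substitutes for this.

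Second, your plan never fixes the combinatorial event that makes ``only $N-i$ cans of free fuel'' precise. The paper counts, in \emph{time} order, the $k$ moments before $t$ at which a can is transported to $x_k$ for the last time; at the $i$-th such moment, $i$ cans are committed beyond $x_k$, so the excursion in progress --- a round trip from the station carrying at most one tankload of tank fuel --- can reach only $D_{N-i}$ on non-committed fuel, and reaching $x_i$ costs $2\max\{x_i - D_{N-i}, 0\}$ extra tankloads that must come from the committed cans and are never restored; summing over $i$ gives the lemma. Your substitute --- checkpoints at the levels $D_{N-i}$ themselves together with an unspecified ``Abel summation / Hall-type'' aggregation --- is precisely the step you concede is not done (``making this rigorous requires pinning down\dots''), so the proposal defers the crux rather than proving it. Note also that your announced main obstacle, the accounting of unlimited station fuel, is not the real difficulty: station fuel enters only through the tank, at most one tankload per round trip, and is already absorbed in the ``$+1$'' of $D_{N-i} = ((N-i)C+1)/2$. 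The genuine difficulties are the interchange argument and the time-ordered commitment count, and neither appears in your proposal.
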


\begin{proof}
Notice that at least $k$ times before moment $t$, the jeep must transport a can to $x_k$ 
that will not return to a smaller position any more. Assume that for such a moment, 
there is another can that is transported to $x_k$ later, after which 
it is used to partially refill the first can. If the second can does return 
to a smaller position than $x_k$ after this refill, then we can interchange
the roles of both cans during the refill. Therefore, we may assume that the second can
does not return to a smaller position than $x_k$ any more after the refill either.

When the jeep transports a can to $x_k$ that will not return
to a smaller position any more for the $i^{\rm th}$ time, it can subsequently
get as far as $D_{N-i}$ by using fuel other than that from the $i$ cans 
that will stay farther than $x_k$, but in order to reach $x_i$,
$2 \max\{x_i - D_{N-i}, 0\}$ additional tankloads of fuel are 
necessary. This fuel cannot be restored any more due to the above assumption. 
Taking the sum from $1$ to $k$ gives the desired result.
\end{proof}

After moment $t$, it is clear that the Dewdney jeep should ride into the desert
and absorb all fuel it encounters, becoming a triple jeep just as in
algorithm \ref{Maddex} when necessary. But the positions and amounts
of fuel of the cans may be different. It is however equally expensive 
to ride as a triple jeep instead of a single jeep from $D_{n-i}$ to $x_i$ as
to transport a can from $D_{n-i}$ to $x_i$ before moment $t$.
So we get the following result.

\begin{theorem}
The solution of Maddex' jeep problem with $N$ cans is $D_N$ in case of a
round trip and the distance the 
Dewdney jeep reaches in algorithm \ref{Maddex} in case of an outward trip.
\end{theorem}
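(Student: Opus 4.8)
The statement splits into two independent claims, and I would treat the round-trip case first, since it is almost in hand. The upper bound $R \le D_N$ is exactly the capacity remark preceding the proposition on dumping a full can at $D_{N-1}$: at every moment the total fuel present away from the fuel station is at most $1 + NC = 2D_N$ (one tank plus $N$ cans of $C$), so a jeep that must eventually return can never pass $D_N$. For the matching lower bound I would feed that proposition into an induction on $N$: having placed a full can at $D_{N-1}$ and retrieved every other can, the jeep makes one final excursion that, using the full can waiting at $D_{N-1}$, covers the last stretch $D_N - D_{N-1} = C/2$ out and back, which is exactly the $C$ tankloads of that can. This shows $D_N$ is attained on a round trip.

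For the outward case the achievability is free: the claimed answer is \emph{defined} to be the reach of algorithm \ref{Maddex}, and the validity of that algorithm follows from the same can-dumping proposition together with the triple-jeep rule, so only optimality remains. Here I would work with the data introduced before Lemma \ref{Maddexup}: fix an optimal algorithm, let $t$ be its last moment at the station, and record the can positions $x_N \le \cdots \le x_1$ and fuels $c_i$ at time $t$, assuming (without loss) a full tank at $t$. After $t$ the jeep can only ride outward absorbing fuel, turning into a triple jeep precisely when it carries more than its capacity $BC+1$, exactly as in algorithm \ref{Maddex}; the whole question is by how much a non-ideal configuration $(x_i,c_i)$ can beat the ideal one, namely all cans full at $x_i = D_{N-i}$.

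The core of the optimality argument is the cost equivalence flagged in the running text: transporting can $i$ out to an overshoot position $x_i > D_{N-i}$ before $t$ costs $2(x_i - D_{N-i})$ tankloads (a round trip over the extra stretch), and this equals the surplus a jeep burns by crossing $[D_{N-i}, x_i]$ as a triple jeep (rate $3$) rather than a single jeep (rate $1$) during the final run. I would make this quantitative by comparing, position by position along the outward run, the fuel profile of the optimal algorithm with that of algorithm \ref{Maddex}, so that every unit of triple-jeep travel forced in the final push is charged either to an overshoot $\max\{x_i - D_{N-i},0\}$ or to a genuine fuel deficit $C - c_i$. Summing over $i$ and invoking Lemma \ref{Maddexup}, which bounds the total overshoot transport $2\sum_i \max\{x_i - D_{N-i},0\}$ by the total missing fuel $\sum_i (C - c_i)$, shows that whatever reach the optimal algorithm buys with its particular configuration is paid for in equal measure, so its reach cannot exceed that of algorithm \ref{Maddex}. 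Combined with achievability, this yields the stated value.

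The step I expect to be the real obstacle is turning the single-can cost identity into an airtight global bound. One must verify that the triple-jeep segments of the final journey decompose cleanly into per-can contributions without double counting, that a can placed \emph{nearer} than $D_{N-i}$ (where the $\max$ term vanishes) indeed forces the compensating ferrying rather than silently helping, and that Lemma \ref{Maddexup} is applied in the correct direction for every prefix $k$. In short, the bookkeeping that converts the local ``equally expensive'' observation into a distance inequality valid for all can configurations at once is where the care is required.
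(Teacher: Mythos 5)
Your proposal follows the paper's own argument essentially step for step: the same capacity bound ($2D_N = NC+1$) plus the can-dumping proposition, applied inductively, for the round-trip value $D_N$, and for the outward case the same data at the last station-departure moment $t$, the same invocation of Lemma \ref{Maddexup}, and the same ``equally expensive'' equivalence between pre-transporting a can from $D_{N-i}$ to $x_i$ and riding that stretch as a triple rather than a single jeep in the final run. The bookkeeping you flag as the remaining obstacle is precisely what the paper itself leaves implicit, so your sketch matches the published proof in both approach and level of completeness.
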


If more jeeps need to make a round trip, then a distance larger than
$D_N$ can be crossed, even if there must be a moment that all jeeps are at 
the farthest position simultaneously. The reader may show this. 









\end{article}
\end{document}